\newtheorem{theorem}{Theorem}
\newtheorem{lemma}{Lemma}
\newtheorem{proposition}{Proposition}
\newtheorem{remark}{Remark}
\newtheorem{definition}{Definition}
\newtheorem{corollary}{Corollary}
\newcommand\eps{\varepsilon}
\renewcommand\P{\mathbb{P}}
\newcommand\R{\mathbb{R}}
\newcommand\Z{\mathbb{Z}}
\newcommand\F{\mathcal{F}}
\newcommand\g{\gamma}
\newcommand\be{\mathbf{e}}
\DeclareMathOperator{\card}{card}
\DeclarePairedDelimiter{\norm}{\lVert}{\rVert}
\title{Two-dimensional Rademacher walk}
\author{Satyaki Bhattacharya\thanks{Centre for Mathematical Sciences, Lund University, Box 118 SE-22100, Lund, Sweden.} \ and Stanislav Volkov${}^*$}
\begin{document}
\maketitle
\begin{abstract}
We study a generalisation of the one-dimensional Rademacher random walk introduced in~\cite{BV} to $\Z^2$ (for $d\ge 3$, the Rademacher random walk is always transient, as follows from Theorem~8.8 in~\cite{EV}). This walk is defined as the sum of a sequence of independent steps, where each step goes in one of the four possible directions with equal probability, and the size of the $n$th step is $a_n$ where $\{a_n\}$ is a given sequence of positive integers. We establish some general conditions under which the walk is recurrent or transient.
\end{abstract}

\medskip\noindent\textbf{Keywords:}  recurrence, transience, Rademacher distribution, non-homogeneous Markov chains.

\noindent {\bf AMS subject classification}: 60G50, 60J10.

\section{Introduction}
A one-dimensional Rademacher walk was introduced in~\cite{BV}, as a generalisation of a simple symmetric random walk on $\Z$ with step sizes following some pre-determined sequence of integers $a_1,a_2,\dots$ (as opposed to all step sizes being one). It was shown that this walk is recurrent for some very slowly growing sequences, and transient, for example, if $a_k=\lfloor k^\gamma\rfloor$ for all $\gamma>0$. Some further generalisations of the Rademacher walk with non-integer step sizes were also considered, and they were studied further in~\cite{Bristol}. This paper studies the extension of the walk to higher dimensions.

We say that $\xi$ has a {\em two-dimensional Rademacher distribution}, if $\xi$ takes one of  the values $\{\be_1,\be_2,-\be_1,-\be_2\}$ where $\be_1=(1,0)$, $\be_2=(0,1)$ are the unit vectors in $\R^2$ with equal probability~$1/4$.

Let $\xi_n$, $n=1,2,\dots$ be a sequence of i.i.d.\ two-dimensional Rademacher random variables. Let~$a_n$ be a deterministic sequence of positive real numbers. The two-dimensional Rademacher walk is defined as $S_n=Z_1+Z_2+\dots+Z_n$ where $Z_i=a_i \xi_i$.

\begin{definition}\label{defTR}
We say that the walk $S_n$ is transient if $\norm{S_n}\to\infty$ a.s. We say that the walk $S_n$ is {\em recurrent}, if $S_n=z$ for infinitely many $n$ a.s., for each $z\in\Z^2$.
\end{definition}

Note that the transience and recurrence are zero-one events. Also, in general, we do not have the dichotomy between recurrence vs.\ transience; for example, if all $a_n\equiv 2$ then the walk is neither recurrent nor transient according to our definition, as it returns to the origin (denoted by $\mathbf{0}=(0,0)$\,) infinitely often, yet never visits points with odd coordinates.

We study the conditions for transience in Section~\ref{SecT} and for recurrence in Section~\ref{SecR}.

\section{Transience}\label{SecT}
\subsection{Non-decreasing steps}
\begin{theorem}\label{t1}
Suppose that $a_n$ is a non-decreasing sequence of integers converging to infinity. Then $S_n$ is transient.
\end{theorem}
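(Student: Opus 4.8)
The plan is to make a linear change of coordinates that decouples the walk into two independent one‑dimensional Rademacher walks. Writing $S_n=(X_n,Y_n)$, set $U_n=X_n+Y_n$ and $V_n=X_n-Y_n$. Since each step $Z_i$ alters exactly one of the coordinates $X,Y$ by $\pm a_i$, the four possibilities $(\Delta X_n,\Delta Y_n)\in\{(a_n,0),(-a_n,0),(0,a_n),(0,-a_n)\}$ map bijectively onto the four values $(\pm a_n,\pm a_n)$ of $(\Delta U_n,\Delta V_n)$, each with probability $1/4$. Hence $U_n=\sum_{i=1}^n a_i\sigma_i$ and $V_n=\sum_{i=1}^n a_i\tau_i$, where $\{\sigma_i\}$ and $\{\tau_i\}$ are mutually independent symmetric signs; in other words $(U_n)$ and $(V_n)$ are two \emph{independent} one‑dimensional Rademacher walks driven by the \emph{same} step sequence $a_n$. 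Because $U_n^2+V_n^2=2\Vert S_n\Vert^2$, we have $\Vert S_n\Vert\to\infty$ if and only if $\max(|U_n|,|V_n|)\to\infty$. This reduction is the structural heart of the argument and the only place the specific geometry of the four directions is used.

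Next I would reduce the claim to a single‑box estimate. Since $\{\Vert S_n\Vert\not\to\infty\}=\bigcup_{M\ge 1}\{\max(|U_n|,|V_n|)\le M\ \text{i.o.}\}$, it suffices to show that for each fixed $M$ the walk visits the box $\{\max(|U_n|,|V_n|)\le M\}$ only finitely often, a.s. Fix $M$ and choose $N_0$ with $a_n>2M$ for $n\ge N_0$. For such $n$, a visit at time $n$ forces $|U_n|\le M$ and $|V_n|\le M$; conditionally on $\F_{n-1}$ this requires both $|U_{n-1}|$ and $|V_{n-1}|$ to lie in the annulus $[a_n-M,a_n+M]$, after which the two independent signs $\sigma_n,\tau_n$ must each be chosen correctly. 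Thus $\P\big(\max(|U_n|,|V_n|)\le M\mid\F_{n-1}\big)\le \tfrac14\,\mathbf 1\{|U_{n-1}|,\,|V_{n-1}|\in[a_n-M,a_n+M]\}$. This quantifies the intuition that the ever‑growing last step overshoots the box, so each individual return is improbable and demands that the two independent walks be simultaneously near the common scale $a_n$.

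The hard part will be to pass from this per‑time improbability to a.s.\ finiteness of the visit count. A naive first Borel--Cantelli fails in general: for very slowly growing sequences (e.g.\ $a_n\asymp\log\log n$) each marginal has $\P(|U_n|\le M)$ of order $A_n^{-1/2}$ with $A_n=\sum_{i\le n}a_i^2$, and $\sum_n A_n^{-1}=\infty$, so the \emph{expected} number of visits to a fixed box can diverge. To overcome this I would exploit two features that have no one‑dimensional analogue: the independence of $(U_n)$ and $(V_n)$, which factorises the joint event and allows one to condition on the entire $U$‑walk and then apply Borel--Cantelli to $V$ along the thin random set of times at which $U$ is near scale $a_n$; and the monotonicity $a_n\uparrow\infty$, which separates the successive annuli and keeps clusters of returns short and infrequent. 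Finally, since transience is a zero--one event, it is enough to prove that $\P(\Vert S_n\Vert\to\infty)>0$; this relaxes the required a.s.\ finiteness to a positive‑probability escape statement along a suitably sparse subsequence of times, which is where I expect the delicate second step to become tractable.
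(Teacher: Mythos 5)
Your first step is correct and is genuinely different from (and cleaner than) the paper's decomposition: the rotation $U_n=X_n+Y_n$, $V_n=X_n-Y_n$ really does turn $S_n$ into a pair of \emph{independent} one-dimensional Rademacher walks driven by the same step sequence, whereas the paper splits the steps into horizontal and vertical ones, so that each coordinate walk uses a \emph{random} subset of the $a_i$'s and an extra Hoeffding estimate (the event $A_j$ in the paper's proof) is needed to ensure both subsets contain enough distinct values. If the rest of the argument were in place, your decomposition would actually simplify the paper's proof. The reduction to ``each box $\{\max(|U_n|,|V_n|)\le M\}$ is visited finitely often'' and the conditional bound $\P\bigl(\max(|U_n|,|V_n|)\le M\mid\F_{n-1}\bigr)\le\frac14\mathbf 1\{|U_{n-1}|,|V_{n-1}|\in[a_n-M,a_n+M]\}$ are also fine. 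But your proof stops exactly where the content of the theorem begins, and the ideas you sketch for the ``hard part'' cannot be completed as described.

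Here is a concrete way to see the gap. Every ingredient you actually establish --- the rotation, the reduction to boxes, the $1/4$ conditional bound (valid once $a_n>2M$), the Rogozin-type bound $\P(|V_n|\le M)\le C(M)n^{-1/2}$, and the zero-one law --- holds verbatim for the sequence of Theorem~\ref{Tex} in Section~\ref{SecR}, which tends to infinity but is \emph{not} monotone, and for which the walk is \emph{recurrent}. Hence no argument assembled from those ingredients alone can prove Theorem~\ref{t1}; monotonicity must be used quantitatively, and in your outline it enters only through the heuristic phrase about ``separated annuli'', which is not an estimate. Your more specific plan --- condition on the whole $U$-walk and apply Borel--Cantelli to $V$ along $R=\{n:|U_n|\le M\}$ --- does not repair this: conditioning does not change the expected number of visits, which (as you yourself note) can be infinite under the hypotheses, and you give no reason why the conditional sum $\sum_{n\in R}\P(|V_n|\le M)$ should be a.s.\ finite despite having infinite mean; moreover, for slowly growing non-decreasing sequences the one-dimensional walk $U$ can itself return to $[-M,M]$ infinitely often (this is the recurrence phenomenon of \cite{BV}), making that sum diverge, while the events $\{|V_n|\le M\}$, $n\in R$, are strongly dependent, so neither Borel--Cantelli lemma applies. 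The idea you are missing is the paper's collapse-by-divisibility: since $a_n$ is non-decreasing, all steps equal to the $(j+1)$-st distinct value $b_{j+1}$ form one consecutive block, so a visit to $\mathbf{0}$ at \emph{any} time of that block forces both coordinate walks, at the block's start, to be $\equiv 0 \pmod{b_{j+1}}$ --- an entire (arbitrarily long) block of visit events collapses into a single divisibility event at a single time. By that time each coordinate sum contains of order $j$ \emph{distinct} integer step sizes, so the S\'ark\"ozi--Szemer\'edi bound $\sup_z\P(T_k=z)\le Ck^{-3/2}$ combined with a Hoeffding truncation (the paper's Lemma~\ref{modlemma}) bounds each divisibility probability by $O(\log j/j)$, and the independence of your $U$ and $V$ squares this to $O(\log^2 j/j^2)$, which is summable over blocks; first Borel--Cantelli then finishes. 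Note that this squaring is precisely where two-dimensionality is used: in one dimension $\sum_j \log j/j=\infty$, consistent with the recurrent monotone examples of \cite{BV}. Without this mechanism, or a substitute exploiting monotonicity just as essentially, your plan cannot close; with it, your rotation slots in neatly and even eliminates the paper's event $A_j$, since both $U$ and $V$ deterministically contain every distinct value $b_1,\dots,b_j$.
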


First, we prove a necessary technical result.
\begin{lemma}\label{modlemma}
Let $d_1,d_2,d_3\dots$ be a sequence of distinct positive integers, and let $T_k=d_1\eps_1+\dots+d_k\eps_k$ where  $\eps_i$ are i.i.d.\ Rademacher. Then for every integer $\displaystyle  m\ge \max_{1\le i\le k}d_i$ and every  $M\in\Z$ we have
$$
\P(T_k-M\equiv 0\pmod m)\le \frac{C \log k}{k}    
$$
for  some universal constant $C>0$.
\end{lemma}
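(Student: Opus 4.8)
The natural tool here is harmonic analysis on the cyclic group $\Z_m$. Writing $\omega=\be^{2\pi\mathrm i/m}$ and using orthogonality of characters together with the fact that each $\eps_i$ is Rademacher, I would start from the exact identity
$$\P(T_k\equiv M \bmod m)=\frac1m\sum_{j=0}^{m-1}\omega^{-jM}\prod_{i=1}^k\E\,\omega^{jd_i\eps_i}=\frac1m\sum_{j=0}^{m-1}\omega^{-jM}\prod_{i=1}^k\cos\Bigl(\frac{2\pi jd_i}{m}\Bigr).$$
Taking absolute values, the $j=0$ term contributes exactly $1/m\le 1/k$, since distinct positive integers force $m\ge\max_i d_i\ge k$; this is already within the target bound. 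It therefore remains to estimate $\frac1m\sum_{j=1}^{m-1}\prod_{i=1}^k\bigl|\cos(2\pi jd_i/m)\bigr|$.

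Next I would use the elementary inequality $|\cos(2\pi x)|\le\be^{-c\Vert x\Vert^2}$, valid for a suitable absolute constant $c>0$, where $\Vert x\Vert$ denotes the distance from $x$ to the nearest integer. This bounds the product by $\be^{-cf(j)}$, where $f(j)=\sum_{i=1}^k\Vert jd_i/m\Vert^2$. The whole problem is thereby reduced to the anti-concentration estimate $\frac1m\sum_{j=0}^{m-1}\be^{-cf(j)}\le C\log k/k$; equivalently, one must show that only very few residues $j$ can make $f(j)$ small. This is exactly where the hypothesis that the $d_i$ are \emph{distinct} enters: a small value of $f(j)$ forces $jd_i\bmod m$ to be simultaneously close to $0$ for essentially all $i$, and distinctness prevents too many $j$ from doing so. (A naive pigeonhole — keeping only half of the factors — yields merely the order $1/\sqrt k$, so one must genuinely exploit the full product.)

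To quantify the distinctness I would group the residues $j\in\{1,\dots,m-1\}$ by $g=\gcd(j,m)$ and set $m'=m/g$. Writing $j=gj'$ with $\gcd(j',m')=1$ one has $\Vert jd_i/m\Vert=\Vert j'd_i/m'\Vert$, so $f(j)=\sum_{s=0}^{m'-1}n_s\Vert j's/m'\Vert^2$, where $n_s$ counts the $d_i$ congruent to $s$ modulo $m'$. Since the $d_i\le m=gm'$ are distinct, each residue class modulo $m'$ contains at most $g$ of them, so $n_s\le g$ while $\sum_s n_s=k$; consequently the $d_i$ must occupy at least $k/g$ distinct residues. Because $j'$ is invertible modulo $m'$, the values $\Vert j's/m'\Vert^2$ run without repetition over $\{\Vert t/m'\Vert^2:t=0,\dots,m'-1\}$, and distributing the masses $n_s\le g$ on the residues with smallest such values gives, for $g\lesssim k$, the uniform lower bound $f(j)\ge c_1k^3/m^2$. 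This is the structural heart of the argument.

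The remaining and, I expect, most delicate step is to pass from this per-block estimate to the final bound uniformly in $m$. For $m\lesssim k^{3/2}$ the bound above already yields $\frac1m\sum_j\be^{-cf(j)}\le\be^{-c_1'k^3/m^2}$, which is more than enough; the difficulty is genuinely large $m$, where the crude lower bound $f(j)\ge c_1k^3/m^2$ is too lossy and one must instead track the \emph{magnitude} of $j$ (for small $|j|_m=\min(j,m-j)$ one has the sharper growth $f(j)\gtrsim |j|_m^2\,k/m^2$ scaled by the relevant residue spread). I would organise this by a dyadic decomposition, either over the scales of $\gcd(j,m)$ or over $|j|_m$: each scale is shown to contribute $O(1/k)$ to $\frac1m\sum_j\be^{-cf(j)}$, and since there are only $O(\log k)$ nonempty scales, the contributions sum to $O(\log k/k)$. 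The logarithmic factor is precisely the (probably inessential) price of this union over scales; tightening it would presumably recover a bound of order $1/k$, but the stated estimate suffices for the transience argument.
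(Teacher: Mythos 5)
Your route (Fourier inversion on $\Z_m$) is genuinely different from the paper's, but as written it contains one outright error and one essential gap. The error: the inequality $|\cos(2\pi x)|\le \exp(-c\Vert x\Vert^2)$ is false; at $x=1/2$ the left side equals $1$ while the right side is $\exp(-c/4)<1$. The correct bound is $|\cos(2\pi x)|\le \exp(-c\Vert 2x\Vert^2)$, and the difference is not cosmetic: if $m$ is even and all $d_i$ are odd, then for $j=m/2$ the product $\prod_i|\cos(2\pi jd_i/m)|$ equals $1$, whereas your $f(m/2)\approx k/4$ would make $\exp(-cf(j))$ exponentially small. So the reduction to bounding $\frac1m\sum_j \exp(-cf(j))$ is invalid as stated, and the entire structural analysis (the gcd decomposition, the residue counting) has to be redone with $\Vert 2jd_i/m\Vert$ in place of $\Vert jd_i/m\Vert$, which changes the arithmetic whenever $m$ is even.

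The gap: the step you yourself flag as ``most delicate'' is precisely the one you do not carry out, and the single quantitative claim you make about it is unjustified. A dyadic decomposition over $\gcd(j,m)$ or over $|j|_m$ has on the order of $\log m$ scales, not $\log k$, and $m$ may be arbitrarily large compared to $k$ (in the application $m=b_{j+1}$, and nothing prevents, say, $m=2^k$); if each scale contributes $O(1/k)$, you only get $O(\log m/k)$, which is not the stated bound. To obtain $C\log k/k$ uniformly in $m$ you must show that all but $O(\log k)$ scales contribute negligibly in aggregate, and that is the heart of the matter, which is missing. (There are also unhandled boundary cases: the residue $s=0$ contributes nothing to $f(j)$, and divisors $g$ of $m$ of size comparable to $k$ defeat the lower bound $f(j)\ge c_1k^3/m^2$.) The paper avoids all of this with a much shorter argument: it quotes the S\'ark\"ozi--Szemer\'edi theorem to get $\sup_{z\in\Z}\P(T_k=z)\le C_1k^{-3/2}$ for distinct $d_i$, uses Hoeffding to confine $T_k$ to the window $(-m\sqrt{2k\log k},\,m\sqrt{2k\log k})$ at cost $2/k$, and then takes a union bound over the $O(\sqrt{k\log k})$ integers in that window congruent to $M$ modulo $m$. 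If you pursue the Fourier route to completion, you will essentially be re-proving a Hal\'asz/S\'ark\"ozi--Szemer\'edi-type anti-concentration theorem from scratch, which is far more work than the lemma requires.
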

\begin{proof}
It is sufficient to prove the statement for $M\in\{0,1,\dots,m-1\}$, which we will assume from now on. By the result of~\cite{SAR}, for each $z\in \Z$ we have $\P(T_k=z)\le \frac{C_1}{k^{3/2}}$. On the other hand, by Hoeffding's inequality~\cite{Hoe}
$$
\P(|T_k|\ge m \sqrt{2k\log k})\le 2\exp\left(-\frac{4\,m^2\,k\, \log k}{4d_1^2+\dots+4d_k^2}\right)\le2\exp(-\log k)=\frac{2}{k}.
$$
Consequently,
\begin{align*}
\P(T_k\equiv M\pmod m)&\le \P(T_k\equiv M\pmod m, |T_k|<m\sqrt{2k\log k})+
\P(|T_k|\ge m \sqrt{2k\log k})
\\ &
\le 
\P(T_k=M+\ell m\text{ for some $\ell\in\Z$ s.t. } |M+m\ell|<m \sqrt{2k\log k})+\frac2{k}
\\ &
\le \frac{1+2m \sqrt{2k\log k}}{m}\sup_{z\in\Z} \P(T_k=z)+\frac2{k}
\le \frac{3m \sqrt{k\log k}}{m}\times\frac{C_1}{k^{3/2}}+\frac2{k}
\\ &
= \frac{3C_1 \sqrt{\log k}}{k}+\frac2{k}\le\frac{C\log k}{k}
\end{align*}
for $C=3C_1+2>0$.
\end{proof}

\begin{corollary}\label{modcor}
Let $d_1,d_2,d_3\dots,d_n$ be a sequence of positive integers, of which at least $k$ are distinct, say $d_{\ell_1},d_{\ell_2},\dots,d_{\ell_k}$, and let $T_n=\sum_{i=1}^n d_i\eps_i$ where  $\eps_i$ are i.i.d.\ Rademacher. Then for every integer $\displaystyle  m\ge \max_{1\le i\le k}d_{\ell_i}$ we have
$$
\P(T_n\equiv 0\pmod m)\le \frac{C \log k}{k}    
$$
where $C$ is the constant from the statement of Lemma~\ref{modlemma}.
\end{corollary}
\begin{proof}
Suppose w.l.o.g.\ that $d_1,d_2,\dots,d_k$ are distinct. Let $T_n=\tilde T_k+U$ where $\tilde T_k=\sum_{i=1}^k d_i\eps_i$. Then, since $\tilde T_k$ and $U$ are independent,
\begin{align*}
\P(T_n\equiv 0\pmod m)&=\sum_M  \P(\tilde T_k-M\equiv 0\pmod m)\,\P(U=M)
\\ &\le \left(\sup_M \P(\tilde T_k-M\equiv 0\pmod m) \right) \sum_M \P(U=M)
\le \frac{C \log k}{k} 
\end{align*}
by Lemma~\ref{modlemma}.
\end{proof}

\begin{proof}[Proof of Theorem~\ref{t1}]
Let $b_1=a_1\ge 1$, $b_2=\min\{a_n: a_n>b_1\}\ge a_1+1$,  $b_3=\min\{a_n: a_n>b_2\}\ge a_2+1$, etc. Denote by $m_i=\card\{n:\ a_n=b_i\}$, $i=1,2,\dots$. Then the sequence $\{a_n\}$ can be represented as
$$
\underbrace{b_1, b_1, \dots b_1,}_{m_1}
\underbrace{b_2, b_2, \dots b_2,}_{m_2} 
\underbrace{b_3, b_3, \dots b_3,}_{m_3} 
\dots
\underbrace{b_j, b_j, \dots b_j,}_{m_j}b_{j+1},\dots .
$$
Let  $k_{j+1}=m_1+m_2+\dots+m_{j}+1=\inf\{n:\ a_n=b_{j+1}\}$ be the first index of the sequence $\{a_n\}$ equal to $b_{j+1}$.

Let $\kappa_i=1$ if $\xi_i\in\{\be_1,-\be_1\}$ and $=0$ otherwise. Let $\eps_i=\pm 1$ equal to the non-zero coordinate of~$\xi_i$, Then $\{\kappa_i\}$ are i.i.d.\ Bernoulli and $\{\eps_i\}$ are i.i.d.\ Rademacher; moreover, these two sequences are independent. The values of $i\in\{1,2,\dots,n\}$ for which $\kappa_i=1$ ($=0$ resp.) correspond to horizontal (vertical, resp.) steps. Formally, let
$$
H_n=\{1\le i<n:\ \kappa_i=1\},\quad
V_n=\{1\le i< n:\ \kappa_i=0\}
$$
be the indices corresponding to horizontal (resp.\ vertical) steps. Note that $Z_i=X_i \be_1+Y_i\be_2$ where 
$$
X_i=\begin{cases}
   a_i\eps_i,&\text{if } \kappa_i=1;\\
   0, &\text{if } \kappa_i=0
\end{cases}
\qquad
Y_i=\begin{cases}
   0,&\text{if } \kappa_i=1;\\
   a_i\eps_i,&\text{if } \kappa_i=0,
\end{cases}
$$
and denote 
$$
S_n=S^X_n\be_1+S^Y_n\be_2\quad \text{ where }
S_n^X=X_1+\dots+X_n,\quad S_n^Y=Y_1+\dots+Y_n.
$$
Fix a large integer $j$ and let
$$
A_j=\left\{\card\{a_i,i\in H_{k_j}\}<\frac j3\text{ \ or \ }\card\{a_i,i\in V_{k_j}\}<\frac j3\right\}.
$$
For each $1\le i< k_j$, $\kappa_i$ equals $0$ or $1$ with equal probability $\frac12$. Hence, by Hoeffding's inequality, there exists a constant $c_0$ such that  
$$
\P(A_j)\le 2e^{-c_0 j},
$$ 
i.e., there are at least $j/3$ horizontal and $j/3$ vertical steps with probability tending to one. As a result, on $A_j^c$, we can write
$$
S^X_{k_j-1}=\sum_{i=1}^{h_j} b'_i \eps'_i, \quad S^Y_{k_j-1}=\sum_{i=1}^{v_j} b''_i \eps''_i
$$
where $h_j=\card\{H_{k_j}\}$, $v_j=\card\{V_{k_j}\}$, the positive integer sequences $\{b_i'\}_{i=1}^{h_j}$ and $\{b_i''\}_{i=1}^{v_j}$ are non-decreasing and each has at least $j/3$ {\em distinct} elements, while $\eps_i',\eps_i''$ are i.i.d.\ Rademacher.

Now, by Corollary~\ref{modcor},
\begin{align*}
\P(S^X_{k_j-1}\equiv 0\pmod {b_j} \mid A_j^c)\le \frac{C\log (j/3)}{j/3},\qquad
\P(S^Y_{k_j-1}\equiv 0\pmod {b_j} \mid A_j^c)\le \frac{C\log (j/3)}{j/3}
\end{align*}
and, moreover, conditioned on the sequence $\kappa_i$ and $A_j^c$, the events above are independent. Since for the walk to hit $\mathbf{0}$ at time $n$, $k_{j}\le n\le k_{j+1}-1$, we need that both its coordinates are divisible by~$b_j$ at time~$k_j-1$, we have
\begin{align}\label{eqBC}
\P(S_n=\mathbf{0}\text{ for some }n\in [k_j,k_{j+1}))
\le \P(A_j)+\left(\frac{C\log (j/3)}{j/3}\right)^2
=O\left(\frac{\log^2 j}{j^2}\right).
\end{align}
Since the quantity in~\eqref{eqBC} is summable in $j$, by the Borel-Cantelli lemma we conclude that the event $\{S_n=\mathbf{0}\}$ occurs finitely often. By the same token, we get that $\{S_n=\mathbf{w}\}$ occurs finitely often for each ${\bf w}\in \Z^2$. Hence, the transience follows.
\end{proof}

\subsection{Transience for non-integer sequences \texorpdfstring{$\{a_n\}$}{}}
In this section, we allow $a_n$ to take any positive real values (not only integers). We will use the same Definition~\ref{defTR} for the transience of the walk.

\begin{definition}
Let $a_n\in \R_+$,  $n=1,2,\dots$. We call the sequence $\{a_n\}$ {\em good}, if for each large~$n$ there is a positive integer $K_n$ such that
$$
\frac{K_n}{\log_2 a_n}\to 1\text{ as }n\to\infty,
$$
and a subset  of $\{1,2,\dots,n\}$  with $K_n$ distinct elements, say $\{ i_1,i_2,\dots,i_{K_n}\}$, such that $i_{K_n}= n$ and $a_{i_{k+1}}\ge 2a_{i_{k}}$ for every $k=1,2,\dots,K_n-1$.
\end{definition}

Later in the Section, in Proposition~\ref{propgood}, we present a sufficient condition for a sequence to be good.

\begin{definition}
Let $r,s\ge 1$ be some constants. We call the sequence $\{a_n\}$ {\em $(r,s)$-monotone}, if for all large~$n$  we have $a_n\le s a_m$ whenever $m\ge rn$.
\end{definition}
\begin{remark}
The case $r=s=1$ corresponds in the above definition to monotonicity in the usual sense.
\end{remark}

\begin{theorem}\label{Tdouble}
Assume that the sequence $a_n$ is good and $(r,s)$-monotone, and for some $\epsilon>0$ we have $a_n\ge (\log n)^{1/2+\epsilon}$ for all sufficiently large $n$. Then the two-dimensional Rademacher walk is transient.
\end{theorem}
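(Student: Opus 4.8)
The plan is to mirror the structure of the proof of Theorem~\ref{t1}, using the Borel–Cantelli lemma on blocks of time where the walk is conditioned to return to a specific point, but now accounting for two complications: the steps $a_n$ are real-valued rather than integers, and the decomposition into horizontal and vertical steps must exploit the \emph{good} property (a geometrically growing subsequence of step sizes) together with $(r,s)$-monotonicity. As before, I would split each step $Z_i=a_i\xi_i$ into its horizontal and vertical parts via the Bernoulli variables $\kappa_i$, so that $S_n^X$ and $S_n^Y$ are independent one-dimensional Rademacher walks with step sizes drawn (randomly, via $\kappa_i$) from $\{a_i\}$. By Hoeffding, with overwhelming probability both the horizontal and vertical step sets each inherit a constant fraction of the good subsequence, so each of $S_n^X$ and $S_n^Y$ contains roughly $K_n/2\asymp \tfrac12\log_2 a_n$ steps whose sizes grow at least geometrically.

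\medskip
The key replacement for Lemma~\ref{modlemma} is an anti-concentration bound tailored to real-valued steps. Since the walk no longer lives on a lattice, I would not ask for divisibility modulo $b_{j+1}$; instead I would directly bound the probability that $\Vert S_n\Vert$ is small, say $\P(\Vert S_n\Vert\le R)$ for a suitable radius $R$. The crucial observation is that a one-dimensional Rademacher sum $\sum_{k} c_k \eta_k$ whose coefficients grow at least geometrically, $c_{k+1}\ge 2 c_k$, has the property that the partial sums are \emph{spread out}: for any interval $I$ of length $\ell$, the probability that the sum lands in $I$ is controlled by how many of the largest steps exceed $\ell$. Concretely, if the top $L$ coefficients each exceed $\ell$ and grow geometrically, then conditioning on the smaller steps, the contribution of the top $L$ signs takes $2^L$ well-separated values, so the sum hits any fixed interval of length $\ell$ with probability at most $C/2^L$ (this is the standard Littlewood–Offord/geometric-gap estimate). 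Applying this to each coordinate with $\ell$ of order $R$, and using that the good subsequence contributes $\asymp \tfrac12\log_2 a_n$ geometrically growing terms, one gets a per-coordinate bound that decays like a negative power of $a_n$; the role of the hypothesis $a_n\ge(\log n)^{1/2+\epsilon}$ is precisely to make this power-of-$a_n$ bound summable once we pass to a sequence of return-times indexed by $j$.

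\medskip
With this anticoncentration estimate in hand, the argument proceeds as follows. I would choose an increasing sequence of times $n_j$ (for instance $n_j\approx r^j$ or another geometric schedule dictated by the good subsequence, so consecutive blocks cover all of $\Z^2\cap$ a growing ball) and show that $\P(\Vert S_n\Vert\le R_j \text{ for some } n\in[n_j,n_{j+1}))$ is summable in $j$. Here the $(r,s)$-monotonicity is used to guarantee that within a block $[n_j,n_{j+1})$ the step sizes do not fluctuate too wildly relative to $a_{n_j}$, so a single radius $R_j$ and a single anticoncentration bound control the entire block; this converts a statement about one fixed time into a statement about an interval of times, exactly as \eqref{eqBC} did via the divisibility-at-time-$k_j-1$ trick. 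Summability then follows because the squared per-coordinate bound decays like $a_{n_j}^{-c}$, which under $a_n\ge(\log n)^{1/2+\epsilon}$ beats $1/n_j\asymp r^{-j}$ comfortably. Borel–Cantelli gives that $\{\Vert S_n\Vert \le R_j\}$ happens finitely often, and since $R_j\to\infty$ this yields $\Vert S_n\Vert\to\infty$ a.s., i.e.\ transience.

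\medskip
The main obstacle I anticipate is the anticoncentration step in the real-valued setting: replacing the clean local-limit bound $\P(T_k=z)\le C_1 k^{-3/2}$ from \cite{SAR} (which crucially used the integer lattice) with a robust small-ball estimate that (i) works for genuinely real coefficients, (ii) only requires a geometrically-growing \emph{subsequence} rather than all coefficients being large, and (iii) yields decay strong enough in terms of $a_n$ to survive the Borel–Cantelli summation after squaring for the two independent coordinates. Getting the bookkeeping right — aligning the good subsequence's length $K_n\sim\log_2 a_n$, the fraction surviving the horizontal/vertical split, and the block radii $R_j$ — so that the final bound is summable against the geometric block schedule is where the technical care will be concentrated.
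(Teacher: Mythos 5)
Your proposal captures one of the paper's two key ideas --- the ``geometric-gap'' anti-concentration coming from the good subsequence --- but it has two genuine gaps, one structural and one quantitative. The structural gap: your scheme hinges on reducing the event $\{\Vert S_n\Vert\le R_j \text{ for some } n\in[n_j,n_{j+1})\}$ to a single-time event, and you assert that $(r,s)$-monotonicity will do this ``exactly as \eqref{eqBC} did via the divisibility-at-time-$k_j-1$ trick.'' But that trick is purely arithmetic: in Theorem~\ref{t1} every step inside a block is an integer multiple of a single integer, so the residue of the position modulo that integer is frozen across the whole block, and hitting $\mathbf{0}$ anywhere in the block forces a divisibility condition at one fixed time. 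For real-valued $a_n$ nothing is frozen within a block, and $(r,s)$-monotonicity (a mild regularity condition on step sizes) supplies no substitute; a union bound over the $\asymp n_j$ times in a block destroys summability. The paper sidesteps blocks entirely by proving a bound at \emph{every} time $n$, namely $\P\bigl(S_n\in(x,x+1)\times(y,y+1)\bigr)\le (c_1^2+o(1))/\bigl(n(\log n)^{1+\epsilon}\bigr)$, summable over all $n$. Achieving this requires a second anti-concentration ingredient that your proposal omits: the Kolmogorov--Rogozin inequality applied to the \emph{last} $n/2$ steps, all of size at least $D_n=(\log\lfloor n/2\rfloor)^{1/2+\epsilon}$ (this is where the hypothesis $a_n\ge(\log n)^{1/2+\epsilon}$ enters as a step-size lower bound, not merely as a counting device), which supplies the factor $1/n$; the good subsequence, taken among the first $\lfloor n/(2r)\rfloor$ steps, supplies only the extra factor $(\log n)^{-(1+\epsilon)}$. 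The actual role of $(r,s)$-monotonicity is to guarantee that the cloud of possible values of the good-subsequence sum has diameter $\tilde D\le D_n$, so that the disjoint shifted unit squares all fit inside the window controlled by Rogozin, as in \eqref{eqS'}.

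The quantitative gap: you condition on the horizontal/vertical split $\kappa_i$ and then apply a per-coordinate Littlewood--Offord bound to the roughly $K_n/2$ geometric terms landing in each coordinate. Conditioning on the split discards half the entropy of the good subsequence: you retain at most $2^{K_n}\approx a_n$ well-separated values in total ($2^{K_n/2}$ per coordinate), whereas the paper keeps all four directions per step and gets $\card(B)=4^{K-\ell}\approx a_{n_0}^{2}\ge(\log n)^{1+\epsilon}$ values, no two in the same unit square. Your combined two-coordinate bound is therefore $\approx a_n^{-1}\le(\log n)^{-(1/2+\epsilon)}$ per time; even granting the block reduction and summing along a geometric schedule $n_j\approx r^j$, this gives $\sum_j (j\log r)^{-(1/2+\epsilon)}$, which diverges whenever $\epsilon\le 1/2$, so at best your argument would prove the theorem only for $\epsilon>1/2$. (A smaller but related issue: splitting the good subsequence evenly between coordinates is only guaranteed with probability $1-a_n^{-c}$, a polylogarithmic-in-$n$ error that is itself not summable.) Both problems are cured by the paper's arrangement: keep the four-direction uniformity of the good-subsequence sum intact, and multiply by the Rogozin factor $1/n$ from the recent large steps instead of passing to blocks.
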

\begin{proof}
To show transience, it suffices to show that for each $(x,y)\in \R^2$ we have $S_n\in (x,x+1)\times(y,y+1)$ for at most finitely many $n$s a.s. W.l.o.g.\ we can assume that $a_n> 1$ for all $n$ (this is, of course, true for all large $n$ due to the assumption of the theorem, and a few initial values of the sequence $\{a_n\}$ do not affect transience).

The proof will follow an argument involving anti-concentration inequalities on two different scales, similarly to the method used in~\cite{Bristol}. Define the event 
\begin{align*}
A_n=\{&\text{The walk makes at least $n/5$ horizontal and $n/5$ vertical steps} 
 \\
 & \text{among the last $n/2$ steps of the $n$--step walk}\}
\end{align*}
By Hoeffding's inequality, it is easy to check that $\P(A_n^c)\le 2e^{-0.01 n}$.

Let  $T_m$ be a one-dimensional Rademacher random walk with step sizes $d_1,d_2,\dots,d_m\ge D$ for some $D>0$.  By the Erd\H{o}s--Littlewood--Offord inequality (see~\cite{Erdos} or Lemma~14 in~\cite{Bristol}),
\begin{align}\label{eqELO}
\P(T_m \in (x-D,x+D])\leq\frac{0.8}{\sqrt{m}}  
\end{align}
for all $x\in\R$ and $m\in \Z_+$.

Let $n_1:=\lfloor n/2\rfloor$ and consider the walk $S_n$ during the times from $n_1+1$ to~$n$. Define 
\begin{align}\label{eqDn}
D=D_n
= \min_{n_1< i\le n}a_i .
\end{align}
Let
\begin{align*}  
A_n'&=\left\{\sum_{i=n_1+1}^n Z_i\cdot \be_1\in \left(x-D,x+D\right]\right\}
\\
A_n''&=\left\{\sum_{i=n_1+1}^n Z_i\cdot \be_2\in \left(x-D,x+D\right]\right\}
\end{align*}
Conditionally on $A_n$ and the sequence $(\kappa_i)$ defined in the proof of Theorem~\ref{t1}, these two events are independent, and by~\eqref{eqELO} each has a (conditional) probability less than $\frac{0.8}{\sqrt{n/5}}$. Consequently, for some $c_1>0$ we have
\begin{align}\label{eqS'}
\P\left(S'\in (x-D,x+D)\times(y-D,y+D)\right)
\le \frac{c_1^2}n+\P(A_n^c)=\frac{c_1^2+o(1)}n.    
\end{align}
where $S'=\sum_{i=n_1+1}^n Z_i$.

\vspace{5mm}
Now let us observe the walk during its first $n_0:=\left\lfloor \frac{n_1}{r}\right\rfloor\le n_1$ steps. By the assumption of the theorem, it is possible to extract a subsequence $i_1,i_2,\dots,i_K$ from $\{1,2,\dots,n_0\}$ of size
$$
K=(1+\g_{n_0}) \log_2 a_{n_0} \quad\text{where } \g_{n_0}\to 0
$$
such that $a_{i_{k+1}}\ge 2 a_{i_{k}}$ for $k=1,2,\dots,K-1$, and $i_K=n_0$.

\begin{figure}
    \centering
    \includegraphics[width=0.7\linewidth]{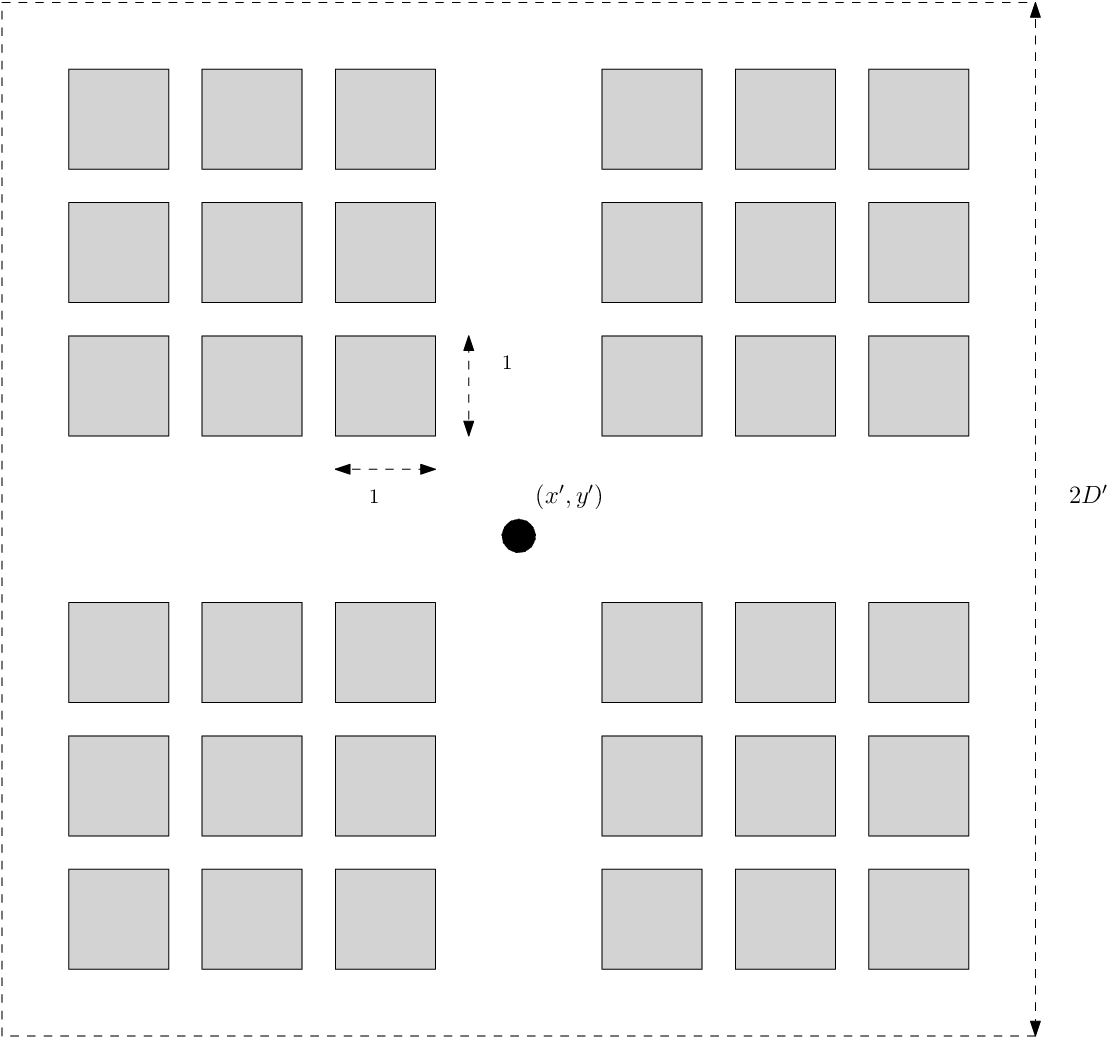}
    \caption{The disjoint union of unit squares in case $K=2$; the left-bottom corners of each square are the elements of $B$.}
    \label{fig1}
\end{figure}

Let $I=(i_1,i_2,\dots,i_{K-\ell})$ where $\ell$ is the smallest positive integer such that $2^{\ell-1}\ge s$, and $S''=\sum_{j\in I} Z_j$. Every step $Z_{j}$ where $j\in I$, can be in one of the four directions, creating~$4^{K-\ell}$ possible values. Since $a_{i_1}>1$ and the sequence is ``super-doubling'', no two such values lie in the same square with side $1$;   let us denote by $B$ the set of all possible values of~$S''$. As a result, for any $(x',y')\in\R^2$ 
$$
\bigcup_{(x,y)\in B} (x'-x,x'-x+1)\times(y'-y,y'-y+1)\subseteq 
(x'-\tilde D,x'+\tilde D)\times (y'-\tilde D,y'+\tilde D)
$$
is a collection of {\bf disjoint} unit squares (see Figure~\ref{fig1}); here
$$
\tilde D=1+a_{i_1}+a_{i_2}+\dots+a_{i_{K-\ell}}\le 2 a_{i_{K-\ell}}\le
2^{1-\ell}a_{i_K}=2^{1-\ell}a_{n_0}\le \frac{a_{n_0}}{s} \le D
$$  
where the last inequality follows from the $(r,s)$-monotonicity, since $n_1\ge r n_0$ implies that $a_i\ge a_{n_0}/s$ for each $i\ge  n_1$ and hence the same is true for $D$ by~\eqref{eqDn}. Moreover, note that~$S''$ has a uniform distribution over the set $B$ and
\begin{align*}   
\mathrm{card}(B)&=
4^{K-\ell}=4^{(1+\g_{n_0}) \log_2 a_{n_0}-\ell}=\frac{a_{n_0}^{2(1+\g_{n_0})}}{4^\ell}
\\
&\ge \frac{\left(\log \lfloor n/(2r)\rfloor\right)^{\left(\frac12+\epsilon\right)\cdot 2(1+\g_{n_0}) }}{4^\ell} >(\log n)^{1+\epsilon}
\end{align*}
where  the last inequality holds for all large $n$ since $|\g_{n_0}|\to0$ as the sequence $\{a_n\}$ is good.

Next, for every $(x',y')\in\R^2$
\begin{align*}
&\P\left(S'+S''\in (x',x'+1)\times (y',y'+1)\right)
\\=&\sum_{(x,y)\in B}\P(\{S''=(x,y)\}\cap\{S'\in (x'-x,x'-x+1)\times(y'-y,y'-y+1)\})
\\=&\sum_{(x,y)\in B}\P(S''=(x,y))\cdot \P(S'\in (x'-x,x'-x+1)\times(y'-y,y'-y+1))
\\ 
\le&\frac{1}{(\log n)^{1+\epsilon}}\cdot\sum_{(x,y)\in B}\P(S'\in (x'-x,x'-x+1)\times(y'-y,y'-y+1))
\\ 
=&\frac{1}{(\log n)^{1+\epsilon}}\cdot\P\left(S'\in \bigcup_{(x,y)\in B} (x'-x,x'-x+1)\times (y'-y,y'-y+1)\right)
\\ \le&\frac{1}{(\log n)^{1+\epsilon}}
         \cdot \P\left(S'\in (x'-\tilde D,x'+\tilde D)\times(y'-\tilde D,y'+\tilde D)\right)
         \leq\frac{c_1^2+o(1)}{n (\log n)^{1+\epsilon}}
    \end{align*}
by~\eqref{eqS'} and the fact that $\tilde D\le D$.

Finally, denoting by $S'''=\sum_{i\in\{1,\dots,\lfloor n/2\rfloor\}\setminus I}Z_i $ the  sum of the remaining steps, since $S',S'',S'''$ are independent, for all $(x'',y'')\in\R^2$ we get
\begin{align*}
\P\left(\sum_{i=1}^n Z_i\in(x'',x''+1)\times (y'',y''+1)\right)
&=
\P\left(S'+S''+S'''\in (x',x'+1)\times (y',y'+1)\right)
\\ &
\le \sup_{(x',y')\in\R^2} \P(S'+S''\in (x',x'+1)\times (y',y'+1))
\\ &\le \frac{ c_1^2+o(1)}{n(\log n)^{1+\epsilon}},
\end{align*}
which is summable over $n$ as $\epsilon>0$. The result now follows from the Borel-Cantelli lemma.
\end{proof}

The next statement tells us when we can apply Theorem~\ref{Tdouble}.
\begin{proposition}\label{propgood}
Suppose there is some $C>0$ such that $a_1=1$, $a_n\ge 1$, $|a_{n+1}-a_n|\le C$ for all $n$,  and $a_n\to\infty$. Then the sequence $a_n$ is good.
\end{proposition}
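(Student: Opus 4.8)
The plan is to build the required super-doubling subsequence by a greedy \emph{backward} selection starting from the index $n$, exploiting the two hypotheses in tandem: $a_n\to\infty$ guarantees that the value can be repeatedly halved, while the bounded increments $|a_{n+1}-a_n|\le C$ guarantee that whenever we halve we never overshoot the target half-value by more than the additive constant $C$. Before constructing anything, I would record the easy upper bound: for any indices with $a_{i_{k+1}}\ge 2a_{i_k}$ and $a_{i_1}\ge 1$ one has $a_n=a_{i_{K_n}}\ge 2^{K_n-1}a_{i_1}\ge 2^{K_n-1}$, so automatically $K_n\le \log_2 a_n+1$. Thus the whole content is to exhibit \emph{some} doubling subsequence ending at $n$ whose length is at least $(1-o(1))\log_2 a_n$.

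For the construction, set $i_{K_n}=n$ and, having chosen $i_{k+1}$, let $i_k$ be the largest index $j<i_{k+1}$ with $a_j\le a_{i_{k+1}}/2$. Since $a_1=1$, such an index exists as long as $a_{i_{k+1}}\ge 2$. By maximality, every index strictly between $i_k$ and $i_{k+1}$ has value exceeding $a_{i_{k+1}}/2$, so the bounded-increment hypothesis gives $a_{i_k}\ge a_{i_k+1}-C> a_{i_{k+1}}/2-C$; together with $a_{i_k}\le a_{i_{k+1}}/2$ this pins each new value into the window $(a_{i_{k+1}}/2-C,\,a_{i_{k+1}}/2]$. The indices produced are strictly decreasing, hence distinct, and all lie in $\{1,\dots,n\}$ with $i_{K_n}=n$, as required by the definition.

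Writing $w_k=a_{i_k}$ and iterating the inequality $w_k> w_{k+1}/2-C$ from $w_{K_n}=a_n$, the accumulated losses telescope into a geometric series, yielding $w_{K_n-j}> a_n/2^j - C\sum_{i=0}^{j-1}2^{-i}> a_n/2^j-2C$. Thus every value stays within a bounded additive distance of the ideal geometric scale $a_n/2^j$, and the halving step can still be performed as long as the current value is at least $2$, i.e. (via the lower bound) as long as $a_n/2^{j-1}\ge 2+2C$. This lets the process run for $j$ up to $\log_2 a_n-\log_2(2+2C)=\log_2 a_n-O_C(1)$ halvings, producing a doubling subsequence of length $K_n\ge \log_2 a_n-O_C(1)$. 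Since $a_n\to\infty$ forces $\log_2 a_n\to\infty$, the additive $O_C(1)$ terms are negligible, and combining with the upper bound $K_n\le\log_2 a_n+1$ squeezes $K_n/\log_2 a_n\to 1$, so $\{a_n\}$ is good (for all large $n$, where $a_n$ is large enough that the count is a positive integer).

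I expect the only delicate point to be the bookkeeping in the final step: verifying that the cumulative error introduced by the bounded increments is a \emph{convergent} geometric series, hence $O_C(1)$ rather than growing with the number of halvings. This is what ensures that the deficit in the number of doublings is bounded by a constant depending only on $C$ and not on $n$, so that it disappears in the ratio with $\log_2 a_n$; everything else is routine.
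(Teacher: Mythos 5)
Your proof is correct and takes essentially the same route as the paper's: a greedy backward halving selection starting at $i_{K_n}=n$, where the bounded-increment hypothesis pins each selected value into the window $(a_{i_{k+1}}/2-C,\,a_{i_{k+1}}/2]$, the accumulated losses telescope into a convergent geometric series (so the deficit is $O_C(1)$), and $a_n\to\infty$ makes this deficit vanish in the ratio $K_n/\log_2 a_n$. The only cosmetic difference is that you explicitly record the trivial upper bound $K_n\le \log_2 a_n+1$ and the maximality argument for the lower bound $a_{i_k}>a_{i_{k+1}}/2-C$, both of which the paper leaves implicit.
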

\begin{remark}
The above proposition covers the case when $a_n=n^\alpha$, $\alpha\in(0,1]$.
\end{remark}

\begin{proof}[Proof of Proposition~\ref{propgood}]
Fix an $n$ and let $j_1=n$. Now let us find $j_2$ such that $a_{j_2}\approx \frac12 a_{j_1}$. Since the distances between consecutive $a_i$s do not exceed $C$, we can find some $j_2$ such that
$$
\frac{a_{n}}2-C=\frac{a_{j_1}}2-C< a_{j_2}\le \frac{a_{j_1}}2.
$$
Similarly, we can find $j_3$ such that $a_{j_3}\le a_{j_2}/2$ and at the same time
$
 a_{j_3}> \frac{a_{j_2}}2-C\ge \frac{a_{n}}{2^2}-\frac C2-C.
$
By induction, we get that
$$
a_{j_k}\ge \frac{a_n}{2^{k-1}}-C\left(1+\frac12+\dots+\frac1{2^{k-2}}\right)=
\frac{a_n+2C}{2^{k-1}}-2C,
$$
and we can continue the process as long as $a_{j_k}\ge 1$. Hence, the maximum possible~$k$ for which the process is feasible is 
$$
\left\lfloor \log_2\left(  \frac{a_n+2C}{C+1/2}\right) \right\rfloor
=(1+\g_n)\log_2 a_n
$$
where $\g_n\to 0$ since $\log_2 a_n\to\infty$ as $n\to\infty$.
\end{proof}

\section{Recurrence for some general sequences}\label{SecR}

Let $B=\{b_1,b_2,\dots\}\subseteq \Z_+\setminus\{0\}$. We say that the set $B$ is {\em good} if for any $b\in B$ there are infinitely many coprimes of $b$ in $B$. 

\begin{theorem}\label{trec}
Suppose that $B\subseteq \Z_+\setminus\{0\}$ is good. Then there exists a sequence $a_1,a_2,\dots$   which contains all elements of the set $B$, and each element only finitely many times, such that the $a$-walk $S_n$ is recurrent.
\end{theorem}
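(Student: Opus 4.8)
The plan is to first decouple the two coordinates by a $45^\circ$ rotation. Writing $S_n=(X_n,Y_n)\in\Z^2$ and setting $U_n=X_n+Y_n$, $V_n=X_n-Y_n$, each step $Z_i=a_i\xi_i$ has increment $(\Delta U_i,\Delta V_i)$ equal to $(a_i,a_i),(a_i,-a_i),(-a_i,a_i),(-a_i,-a_i)$, each with probability $1/4$; equivalently $\Delta U_i=a_i\eta_i$ and $\Delta V_i=a_i\eta_i'$ with $\eta_i,\eta_i'$ independent Rademacher, and all of $\{\eta_i,\eta_i'\}_i$ mutually independent. Thus $U_n$ and $V_n$ are two independent one-dimensional Rademacher walks with the \emph{same} step sequence $\{a_i\}$, and since $(x,y)\mapsto(x+y,x-y)$ is injective with $S_n=\mathbf 0\iff U_n=V_n=0$, we get $\P(S_n=\mathbf 0)=\P(U_n=0)^2$. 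Recurrence of $S$ then becomes the statement that the independent pair $(U_n,V_n)$ hits each admissible lattice point infinitely often, and the one-dimensional anti-concentration bounds already used in Lemma~\ref{modlemma} become directly available.

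Next I would build $\{a_n\}$ as a concatenation of blocks $\mathcal B_1,\mathcal B_2,\dots$ with geometrically growing lengths. Most of each block $\mathcal B_k$ is a long run of a single value $c_k\in B$: during such a run the walk evolves as $c_k$ times an ordinary two-dimensional simple random walk on whatever coset of $c_k\Z^2$ it currently occupies, so the probability of occupying a prescribed point of that coset is of simple-random-walk order $\asymp 1/t$ in the elapsed time $t$, \emph{independently} of the possibly huge scale $c_k$. Into the blocks I would also insert, but far more rarely, pairs of previously unused coprime elements of $B$ (available precisely because $B$ is \emph{good}); these coprime steps let the walk move between cosets and guarantee that the set of steps actually used has overall gcd $1$, so that every point of $\Z^2$ is reachable, while also ensuring that each element of $B$ is used only finitely often and that all of $B$ is eventually used. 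Choosing the lengths geometrically and keeping the coprime insertions sparse, the return probability during block $k$ stays comparable to the simple-random-walk value, giving
\[
\sum_n\P(S_n=\mathbf 0)=\sum_n\P(U_n=0)^2=\infty .
\]

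Divergence of this series only makes the expected number of returns infinite, so I would promote it to infinitely many returns almost surely by the second-moment (Kochen--Stone) method. With $R_N=\sum_{n\le N}\mathbf 1\{S_n=\mathbf 0\}$ it suffices to bound $\E R_N^2\le C(\E R_N)^2$; here the key input is the factorisation
\[
\P(S_m=\mathbf 0,\,S_n=\mathbf 0)=\P(S_m=\mathbf 0)\,\P\!\Big(\textstyle\sum_{m<i\le n}Z_i=\mathbf 0\Big),\qquad m<n,
\]
together with an upper bound on the increment return probability $\P(\sum_{m<i\le n}Z_i=\mathbf 0)$ by a constant multiple of a comparable fresh return probability, uniformly in $m,n$. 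Kochen--Stone then gives $\P(S_n=\mathbf 0\text{ i.o.})>0$, and since this is a $0$--$1$ event it equals $1$; applying the same argument to an arbitrary target $z$ (reachable by the gcd-$1$ property) and taking a countable union over $z\in\Z^2$ yields recurrence. I expect the main obstacle to be exactly this uniform two-point control: the walk is non-homogeneous and simultaneously lives on several scales, and there is a genuine tension between the two demands --- frequent returns to $\mathbf 0$ want long runs of a single repeated value (for which $\mathbf 0$ is a lattice point carrying the full simple-random-walk return probability), whereas reaching every point of $\Z^2$ forces coprime step sizes that spread the walk over a coarser scale and depress point probabilities. Making the block lengths, the scales $c_k$, and the coset bookkeeping fit together so that the local-limit estimates hold uniformly across block boundaries is the technical heart of the construction.
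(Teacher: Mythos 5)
Your rotation trick is correct (with $U_n=X_n+Y_n$, $V_n=X_n-Y_n$ the two coordinates do become independent one-dimensional Rademacher walks with the same step sequence), but the proposal has a genuine gap exactly where you flag ``the technical heart'': the claim that during a long run of a single value $c_k$ the return probability to $\mathbf{0}$ is of simple-random-walk order $1/t$. That is only true if the walk sits at a point of $c_k\Z^2$ (with the right parity) when the run begins; otherwise the return probability during the entire run is exactly $0$. Arranging this is not a routine bookkeeping matter: one needs an equidistribution estimate modulo $c_k$ for the pre-run walk, which requires on the order of $c_k^2$ steps of a size coprime to $c_k$ placed immediately before the run, and even then the walk enters the correct coset only with probability $\asymp 1/c_k^2$, not with probability of order one. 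This is precisely the machinery (Corollary~3.2 of~\cite{BV}) that the paper invokes for its \emph{explicit} example in Theorem~\ref{Tex}, where the sub-block lengths and the $3k^2$ buffer steps are tuned to make it work. Your ``sparse'' coprime insertions cannot do this job, so nothing in the proposal actually establishes $\sum_n \P(S_n=\mathbf{0})=\infty$; for a general good set $B$ (say $B=\{2^k\}\cup\{3^k\}$, whose elements grow fast) the series could well converge under your construction. A second, independent gap is the final upgrade: Kochen--Stone yields only $\P(S_n=\mathbf{0}\text{ i.o.})>0$, and your appeal to a zero--one law is unjustified here --- the steps are independent but not i.i.d.\ (so Hewitt--Savage does not apply), and $\{S_n=\mathbf{0}\text{ i.o.}\}$ is not a tail event, so no off-the-shelf zero--one law covers it.

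The paper's proof of Theorem~\ref{trec} sidesteps both difficulties with a different key idea, which your proposal is missing. Since the theorem lets you \emph{choose} the sequence, each block interleaves two previously unused coprime elements $b',b''\in B$ with Bézout multiplicities $c',c''$ satisfying $c'b'-c''b''=1$: grouping $c'$ steps of size $b'$ and $c''$ steps of size $b''$ into one compound step $\tilde Z_i$, the compound step takes each of the values $\pm\be_1,\pm\be_2$ with positive probability, so the within-block walk is an irreducible, homogeneous random walk on all of $\Z^2$ --- there is no coset to enter and no scale-$c_k$ anti-concentration penalty. Chung--Fuchs then makes this compound walk recurrent, so (Lemma~\ref{lemmaCF}) each block of a fixed length $N_0(C)$ returns the walk to $\mathbf{0}$ with conditional probability at least $1/2$ given the whole past, where $C$ bounds the current displacement. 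Lévy's extension of the Borel--Cantelli lemma then gives infinitely many visits to $\mathbf{0}$ (and to every $\mathbf{w}\in\Z^2$) almost surely, with no second-moment computation and no zero--one law. In short: where you try to get returns from long constant runs and pay an unresolved coset-entry cost, the paper manufactures unit-step irreducibility inside every block out of the coprimality guaranteed by goodness of $B$, which is what makes the argument close.
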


\begin{lemma}\label{lemmaCF}
Let $C>0$ and $z\in\Z^2$ with $\norm{z}\le C$, and $b'$ and $b''$ are positive integers which are coprime. Then there exist three positive integers $c',c'', N_0$  such that $N_0$ depends on $c',c''$ and~$C$ only and
$$
\P(T_m=z\text{ for some }m\le N_0)\ge 1/2
$$
where
$$
T_m=\sum_{i=1}^m \tilde Z_i    \text{     with }
\tilde Z_i=b'(\xi_{i,1}'+\dots+\xi_{i,c'}')+b''(\xi_{i,1}''+\dots+\xi_{i,c''}'')
$$
and $\xi'_{i,k},\xi''_{i,k},$  are i.i.d.\ two-dimensional Rademacher random variables.  
\end{lemma}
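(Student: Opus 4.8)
The plan is to use the coprimality of $b'$ and $b''$, via Bezout's identity, to show that a single block $\tilde Z_i$ can realise each of the four unit steps $(\pm1,0),(0,\pm1)$ with positive probability, and then to invoke recurrence of the resulting walk. Fix integers $\alpha,\beta$ with $\alpha b'+\beta b''=1$, and choose $c'\ge|\alpha|$, $c''\ge|\beta|$ with $c'\equiv\alpha$ and $c''\equiv\beta\pmod2$. With this choice one can route the $c'$ steps of size $b'$ and the $c''$ steps of size $b''$ inside a single block so that the $b'$-steps contribute a net $\alpha$ to the $x$-coordinate and $0$ to the $y$-coordinate, while the $b''$-steps contribute $\beta$ and $0$; the parity conditions guarantee that the remaining steps can be paired into cancelling $\pm$ pairs, so the block sum equals $(\alpha b'+\beta b'',0)=(1,0)$. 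Each such routing is a fixed sign/direction pattern of the underlying Rademacher variables and hence has a fixed positive probability $\rho_0=\rho_0(c',c'')>0$; interchanging coordinates or overall signs yields $(-1,0),(0,1),(0,-1)$ as well.

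Next I would record two structural facts about $\{T_m\}$. First, each block changes the parity of $X+Y$ by $b'c'+b''c''\equiv \alpha b'+\beta b''=1\pmod2$, so $T_m$ occupies precisely the coset $\{(x,y):x+y\equiv m\pmod2\}$ at time $m$; concatenating the four unit steps shows that, on the matching parity, the reachable set is the whole coset, so every $z$ with $\Vert z\Vert\le C$ is attained at all sufficiently large times $m$ of the correct parity. Second, the increment $\tilde Z$ is symmetric, mean zero, of finite variance, and genuinely two-dimensional (it charges both the horizontal and the vertical direction). Passing to the $2$-step walk on the even sublattice removes the period, and the Chung--Fuchs criterion then makes $\{T_m\}$ recurrent, while an irreducible recurrent walk visits every state of its state space infinitely often almost surely.

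Consequently $\P(\exists m:T_m=z)=1$ for every such $z$, whence $\P(\exists m\le N:T_m=z)\uparrow1$ as $N\to\infty$. Since there are only finitely many lattice points $z$ with $\Vert z\Vert\le C$, I would take $N_0$ to be the least integer for which all of these finitely many probabilities exceed $1/2$; this $N_0$ is determined by $c',c''$ and $C$ (the dependence on $b',b''$ being absorbed into the choice of $c',c''$) and is uniform in $z$, which is exactly the assertion. For an effective bound one can instead use the last-exit decomposition $\P(\exists m\le N:T_m=z)\ge G_N(0,z)/G_N(0,0)$, where $G_N(x,y)=\sum_{m=0}^N\P(T_m=y\mid T_0=x)$, together with the two-dimensional local central limit theorem, which forces the ratio to tend to $1$.

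The step I expect to be the main obstacle is the first one: producing the unit steps inside a single block with probability bounded below, since this is where the coprimality must be converted into an explicit cancellation pattern and where the parities of $c'$ and $c''$ have to be chosen consistently for all four unit moves simultaneously. The second delicate point is checking that the walk is genuinely two-dimensional and aperiodic on the even sublattice, so that recurrence and irreducibility really do apply to the prescribed target $z$.
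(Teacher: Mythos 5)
Your proposal is correct and follows essentially the same route as the paper: use B\'ezout's identity to show that a single block $\tilde Z_i$ realises each of $\pm\be_1,\pm\be_2$ with positive probability (hence irreducibility), invoke the Chung--Fuchs theorem for recurrence, and use the finiteness of $\{z\in\Z^2:\Vert z\Vert\le C\}$ to extract a uniform $N_0$. The paper simply chooses $c',c''$ with $c'b'-c''b''=1$, so that aligning all $c'$ steps along $+\be_1$ and all $c''$ steps along $-\be_1$ gives the unit step directly, which avoids your parity-matching and cancelling-pair bookkeeping, but this is only a cosmetic simplification of the same argument.
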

\begin{proof}
Let $c',c''>0$ be such that $c'b'-c''b''=1$; the existence of such $c',c''$ follows from Bézout's identity. This implies that $\P(\tilde Z_i=\mathbf{f})> 0$  for each $\mathbf{f}=\be_1,-\be_1,\be_2,-\be_2$ and thus the Markov chain $T_m$ is irreducible on $\Z^2$.  

Next, by the Chung–Fuchs theorem (see \cite{Chung}) $T_m$ is recurrent and thus there exists $N_0$ such that 
$$
\inf_{z'\in \Z^2:\ \norm{z'}\le C} \P(T_m\text{ visits }z'\text{ for some }m\le N_0)\ge \frac12
$$
which concludes the proof.
\end{proof}

\begin{proof}[Proof of Theorem~\ref{trec}]
The construction of the sequence $a_i$ will be done recursively. Let $n_0=0$ and suppose that we have already constructed the sequence up to time $i=n_k$. Let $\alpha_k=\max\{a_1,a_2,\dots,a_{n_k}\}$, then $\norm{S_{n_k}}\le \alpha_k n_k$. Find the element of $B$ not already used, with the smallest index, call it $b'$, and let $b''$ be another element of $B$, not already used so far, such that $b'$ and $b''$ are coprimes. By Lemma~\ref{lemmaCF} there exist $c',c'',N_0$ corresponding to $b',b''$ and $C:=\alpha_k n_k$. Now define
$$
a_{n_k+i+1}=\begin{cases}
b',&\text{if } i=\ell\pmod{c'+c''}\text{ for some }\ell\in\{0,1,\dots,c'-1\} ,\\
b'',&\text{if } i=\ell \pmod{c'+c''} \text{ for some }\ell\in\{c',c'+1,\dots,c'+c''-1\} 
\end{cases}
$$
for $i=0,1,2,\dots, (c'+c'')N_0-1$. Define also $n_{k+1}=n_k+(c'+c'')N_0$. It is clear that by following this procedure, we will eventually use all elements of $B$, and, moreover, we will use each element of $B$ only finitely many times.

Now, by Lemma~\ref{lemmaCF}, since $\norm{S_{n_k}}\le C$,
$$
\P(S_n=0\text{ for some }n\in[n_k+1,n_{k+1}]\mid \F_{n_k})\ge \frac 12.
$$
Consequently, $S_n=0$ for infinitely many $n$s a.s., by Lévy's extension of the Borel-Cantelli lemma 
(see e.g.~\cite{WIL}).
\end{proof}

\subsection{An explicit recurrent sequence}
In this section, we present an explicit example of a sequence $a_n$ such that $a_n\to\infty$ and yet the walk is recurrent. The elements of this sequence will consist of blocks numbered $1,2,\dots$, and the $k$th block will be a consecutive sequence of exactly $k$ sub-blocks, denoted by $B_{k,1},B_{k,2},\dots,B_{k,k}$, where
\begin{align*}
B_{k,i}&=\underbrace{k,k,\dots,k,}_{L_{k,i}\text{ elements}}\, 
\underbrace{k-1,k-1,\dots,k-1}_{3k^2\text{ elements}}
\quad\text{for }i=1,2,\dots,k-1;
\\
B_{k,k}&=\underbrace{k,k,\dots,k}_{L_{k,k}\text{ elements}},
\qquad \text{where }L_{k,i}=2^{2^{k^2+i-1}},\ i=1,2,\dots,k.
\end{align*}

Now suppose that the sequence ${a_n}$ is given by $\{a_n\}=B_1 B_2\dots$ where $B_k=B_{k,1} B_{k,2}\dots B_{k,k}$.
\begin{theorem}\label{Tex}
A two-dimensional Rademacher walk with the above sequence is recurrent.
\end{theorem}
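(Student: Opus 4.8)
The plan is to prove recurrence through Lévy's conditional form of the Borel--Cantelli lemma, exactly as in the proof of Theorem~\ref{trec}. Write $N_k$ for the index at which the $k$th block ends and let $\F_k=\sigma(Z_1,\dots,Z_{N_k})$. It suffices to show that for every fixed $z\in\Z^2$ there is a constant $c>0$ with
$$
\P\bigl(S_n=z\text{ for some }n\in(N_{k-1},N_k]\;\big|\;\F_{k-1}\bigr)\ \ge\ \frac{c}{k}\qquad\text{for all large }k,
$$
because then $\sum_k\P(\,\cdot\mid\F_{k-1})=\infty$ almost surely and \cite{WIL} forces $S_n=z$ infinitely often a.s.; intersecting over the countably many $z\in\Z^2$ yields recurrence.

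The argument rests on the interaction of the two consecutive (hence coprime) step sizes $k$ and $k-1$ making up block~$k$. Since $k-1\equiv-1\pmod k$, a step of size $k-1$ shifts $S_n\bmod k$ by $\mp\be_j$, whereas a step of size $k$ leaves $S_n\bmod k$ unchanged. Consequently, during any maximal run of $k$-steps the residue $S_n\bmod k$ is frozen, and in units of $k$ such a run is exactly a standard planar simple random walk (each increment equally likely to be $\pm\be_1,\pm\be_2$). In particular the walk can equal $z$ during the run of $k$'s inside $B_{k,i}$ only if $S_n\equiv z\pmod k$ at the start of that run --- and producing this residue is precisely the role of the $3k^2$ copies of $k-1$ that separate consecutive runs.

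I would therefore treat the run of $k$'s in $B_{k,i}$ (for $2\le i\le k$) in two stages, exploiting that the $3k^2$ trailing $(k-1)$-steps of $B_{k,i-1}$ and the $k$-steps of the run involve disjoint increments. First, conditionally on the history up to those $3k^2$ steps of size $k-1$, they perform a planar simple random walk on $(\Z/k\Z)^2$; as $3k^2$ comfortably exceeds its mixing time, of order $k^2$, the residue equals the prescribed value $z\bmod k$ at the start of the run with probability at least $c_1/k^2$ (the random horizontal/vertical split of the steps averages out the parity obstruction when $k$ is even). Second, condition on the residue being correct and on the starting position $v$; then $(S_n-z)/k$ runs as a standard planar walk of length $L_{k,i}=L_{k,i-1}^2$ started at $(v-z)/k$. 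The number $M_i$ of steps preceding this run is at most $L_{k,i-1}^{1+o(1)}=L_{k,i}^{1/2+o(1)}$ (here $N_{k-1}\ll L_{k,1}$ handles $i=1$), so a maximal inequality gives $\Vert v\Vert\le k\sqrt{M_i}\,\log M_i$ off an event of negligible probability, i.e.\ the starting distance $r\approx\Vert v\Vert/k$ satisfies $\log r\le(\tfrac12+o(1))\log\sqrt{L_{k,i}}$. The classical planar hitting estimate (gambler's ruin for the harmonic function $\log|\cdot|$: from distance $r$ the walk reaches $\mathbf0$ before leaving the disc of radius $\rho$ with probability $\approx 1-\log r/\log\rho$), applied with $\rho=\sqrt{L_{k,i}}$, then gives a hitting probability of at least $1-(\tfrac12+o(1))\ge\tfrac13$. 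Multiplying the two stages yields $\P(S_n=z\text{ during the }i\text{th run}\mid\text{history before it})\ge c_2/k^2$.

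Finally I would chain the (at least $k-1$) runs together: conditioning successively on not having hit $z$ yet and on the favourable position events, the probability of missing $z$ throughout block~$k$ is at most $(1-c_2/k^2)^{\,k-1}+o(1/k)=1-c_2/k+o(1/k)$, so the per-block hitting probability is at least $c/k$, as needed. I expect the real obstacle to be the measure-theoretic bookkeeping in the third paragraph: the residue-randomisation is governed by the $(k-1)$-steps and the coarse planar return by the $k$-steps, and although these are disjoint sets of increments they are coupled through the starting position $v$, so some care is needed to justify multiplying the two probabilities (conditioning on $v$ and on the correct residue, then invoking the independent $k$-step randomness). Making the torus-mixing lower bound and the ``controlled starting distance'' estimate quantitative is the remaining technical work, whereas the planar hitting estimate and Lévy's Borel--Cantelli are standard.
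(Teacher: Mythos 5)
Your proposal is, in outline, the same proof as the paper's: the $3k^2$ steps of size $k-1$ randomize the position mod $k$ (the paper gets the $\sim 1/k^2$ bound from Corollary~3.2 of \cite{BV}), each run of $k$-steps is a rescaled planar simple random walk whose starting distance $L_{k,i}^{1/4+o(1)}$ is controlled by Hoeffding, a constant-probability hitting estimate finishes each sub-block attempt, and a conditional Borel--Cantelli argument sums $(k-1)\cdot c/k^2$ over $k$ (the paper applies Lemma~5.1 of \cite{BV} directly at the sub-block level, with the exceptional events $E_{k,j}$ absorbed there, which spares your ``conditioning on not having hit yet'' bookkeeping). However, two of your steps fail as literally stated, though both are repairable.

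First, with $\rho=\sqrt{L_{k,i}}$ the gambler's ruin need not resolve within the run: the expected exit time of the disc of radius $\rho$ is itself of order $\rho^2=L_{k,i}$, so ``hits $\mathbf{0}$ before leaving the disc with probability $\ge 1/3$'' does not imply ``hits $\mathbf{0}$ within the $L_{k,i}$ available steps with probability $\ge 1/3$''. You need $\rho=L_{k,i}^{\beta}$ with $\beta\in(1/4,1/2)$, so that the exit time is $o(L_{k,i})$ in probability (Markov's inequality) while $\log r/\log\rho\le (1/4)/\beta$ stays bounded away from $1$; this is exactly what the paper's Lemma~\ref{lemmacube} does by taking $R=r^{1.25}$ and separately proving $\P(\tau>r^3)=o(1)$. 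Second, the parity obstruction for even $k$ is \emph{not} averaged out by the random horizontal/vertical split: every step changes the parity of the coordinate sum $x+y$ by $a_i\bmod 2$ deterministically, so at the start of any run of $k$-steps this parity is a deterministic (and, for this particular sequence, even) quantity; the randomness of the split only symmetrizes the two individual coordinate parities, never their sum. Hence for even $k$ and a target $z$ with $z_1+z_2$ odd, your stage-one probability is $0$, not $c_1/k^2$, and your claimed per-block bound $c/k$ ``for all large $k$'' is false. This does not sink the argument: for $z=\mathbf{0}$ the parity is consistent at the start of every run (all $L_{k,i}$ and all tail contributions $3k^2(k-1)$ are even), and for general $z$ one restricts to blocks with $k$ odd, where the $\pm1$ walk on $(\Z/k\Z)^2$ is aperiodic, since $\sum_{k\ \mathrm{odd}}c/k$ still diverges --- but this fix must be made explicit.
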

We begin with a (probably) well-known result.
\begin{lemma}\label{lemmacube}
Consider a simple random walk $T_n=(X_n,Y_n)$ on $\Z^2$ with steps $\pm\be_1,\pm\be_2$ and $T_0=(x_0,y_0)$. Let $\tau_0=\inf\{n:\ T_n=\mathbf{0}\}$. Then there exists a universal constant $\alpha>0$ such that
$$
\P(\tau_0\le r^3)\ge \alpha, \qquad\text{where }r=\norm{(x_0,y_0)}.
$$
\end{lemma}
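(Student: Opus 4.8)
The plan is to use the classical first-passage (Green's-function) decomposition for the recurrent walk $T_n$, reducing the claim to two elementary estimates that come from the local central limit theorem. Write $p_n(x,y)=\P_x(T_n=y)$ and, for the walk observed up to a horizon $t$, let $G_t(x,y)=\sum_{n=0}^{t}p_n(x,y)$ be the truncated Green's function. First I would record the standard first-visit identity $p_n(x,\mathbf{0})=\sum_{k=0}^n \P_x(\tau_0=k)\,p_{n-k}(\mathbf{0},\mathbf{0})$. Summing it over $0\le n\le t$ and using the monotonicity $G_{t-k}(\mathbf{0},\mathbf{0})\le G_t(\mathbf{0},\mathbf{0})$ gives
$$
\P_x(\tau_0\le t)\ \ge\ \frac{G_t(x,\mathbf{0})}{G_t(\mathbf{0},\mathbf{0})},
$$
so it suffices to bound the right-hand side below by a universal constant when $t=r^3$ (for $r$ large; the finitely many bounded values of $r$ are absorbed into $\alpha$ or handled directly).

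The two estimates I would establish, both via the local CLT for the $2$-periodic simple walk on $\Z^2$, are: (i) an on-diagonal upper bound $p_n(\mathbf{0},\mathbf{0})\le C_1/n$ valid for all $n\ge 1$ (and $p_n(\mathbf{0},\mathbf{0})=0$ for odd $n$), which yields $G_{r^3}(\mathbf{0},\mathbf{0})\le C_1'\log r$; and (ii) a matching lower bound $p_n(x,\mathbf{0})\ge c_3/n$ for every $n$ of the correct parity in the window $c_2 r^2\le n\le r^3$. The point of (ii) is that once $n\gtrsim r^2=\|x\|^2$ the Gaussian factor $e^{-\Theta(r^2/n)}$ in the local CLT is bounded away from zero, so the $\Theta(1/n)$ on-diagonal decay takes over. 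Summing (ii) over the admissible $n$ then gives $G_{r^3}(x,\mathbf{0})\ge c_3'\bigl(\log r^3-\log(c_2 r^2)\bigr)\ge c_4\log r$. Dividing, the two $\log r$ factors cancel and $\P_x(\tau_0\le r^3)\ge c_4/C_1'=:\alpha>0$, uniformly in $x$ with $\|x\|=r$.

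Concretely I would proceed in the order: (a) state and cite the local CLT with its explicit on-diagonal upper bound and near-diagonal Gaussian lower bound; (b) deduce (i) and (ii), keeping careful track of the parity constraint $n\equiv \|x\|_1 \pmod 2$ so the sums run only over non-vanishing terms; (c) assemble the Green's-function ratio bound above; and (d) replace $\tau_0\le r^3$ by $\tau_0<r^3$, where for large $r$ the discrepancy is a single term and harmless.

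The main obstacle is purely technical: extracting from the local CLT a \emph{uniform} two-sided estimate $p_n(x,\mathbf{0})=\Theta(1/n)$ that holds simultaneously for all $x$ with $\|x\|=r$ and all admissible $n$ across the whole window $[c_2 r^2,\,r^3]$, together with the bookkeeping of the parity of $n$. Everything after that is the controlled telescoping of harmonic sums $\sum 1/n\sim\log$, whose leading $\log r$ contributions cancel in the ratio and leave a constant $\alpha$ independent of $r$.
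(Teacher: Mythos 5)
Your proposal is correct, but it takes a genuinely different route from the paper. The paper's argument is a two-scale stopping-time argument: it introduces the intermediate radius $R=r^{1.25}$, uses optional stopping of the supermartingale $\log(X_n^2+Y_n^2-1/2)$ to show that the walk hits $\mathbf{0}$ before leaving the ball of radius $R$ with probability at least $1/5-o(1)$, and then uses the reflection principle (plus Hoeffding) to show that the exit time of that ball is less than $r^3$ with probability $1-o(1)$; combining the two gives $\P(\tau_0<r^3)\ge 1/5-o(1)$. You instead run the classical first-passage decomposition $p_n(x,\mathbf{0})=\sum_{k\le n}\P_x(\tau_0=k)\,p_{n-k}(\mathbf{0},\mathbf{0})$ into the Green's-function ratio bound $\P_x(\tau_0\le t)\ge G_t(x,\mathbf{0})/G_t(\mathbf{0},\mathbf{0})$ and feed it two-sided local CLT estimates; the $\log$ factors cancel, and with $t=r^3$ your ratio is in fact asymptotically $\log(t/r^2)/\log t=1/3$, which is the sharp constant (better than the paper's $1/5$). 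What your route buys: it is modular (no Lyapunov function to guess, no separate exit-time control), it makes the dependence of $\alpha$ on the exponent transparent (replacing $r^3$ by $r^{2+\delta}$ gives $\alpha\approx\delta/(2+\delta)$), and it transfers to any mean-zero finite-variance walk admitting a uniform LCLT. What the paper's route buys: it is self-contained and elementary, needing only a one-line discrete supermartingale computation and the reflection principle, and it avoids both the parity bookkeeping and the citation of a local CLT with uniform error term, which is exactly the technical burden your step (ii) carries. One shared caveat: both arguments really prove the bound only for $r$ beyond an absolute threshold (for $r=1$ the statement with the strict inequality $\tau_0<r^3=1$ is literally unattainable), so in both cases $\alpha$ is a constant valid for large $r$, which is all that the application in Theorem~\ref{Tex} requires; your remark (d) about absorbing small $r$ should be read in that light rather than as a literal fix.
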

\begin{proof}
The fact that $\tau_0<\infty$ a.s.\ follows from the recurrence of the two-dimensional walk. Let $R=r^{1.25}$ and
$$
\tau=\inf\{n:\ T_n=\mathbf{0} \text{ or }\norm{T_n}\ge R\}.
$$
Define
$$
f(x,y)=\begin{cases}\log(x^2+y^2-1/2), \text{if } (x,y)\ne\mathbf{0};\\
-5,&\text{otherwise}.
\end{cases}
$$
An easy calculation\footnote{
Indeed, let $\Delta_{x,y}:=\frac14(f(x+1,y)+f(x-1,y)+f(x,y+1)+f(x,y-1))-f(x,y)$.
Then $\exp(4\Delta_{x,y})=1-\frac{64(x^2-y^2)^2}{(2x^2+2y^2-1)^4}\le 1$ when  $|x|+|y|\ge 2$, and 
$\exp(4\Delta_{x,y})=126 e^{-5}<0.85$  when $|x|+|y|=1$.
}
shows that $\xi_n=f(T_n)$  is a supermartingale as long as $ T_n\ne\mathbf{0}$, hence by the optional stopping theorem, as $r\to\infty$,
\begin{align}\label{tau0}
\P(T_\tau\ne \mathbf{0})\le \frac{\ln(r)}{\ln(R)}+o(1)=\frac45+o(1).    
\end{align}

On the other hand, 
$$
\P(\tau> r^3)=o(1)\qquad\text{as }r\to\infty.
$$
Indeed, since during $3 \lfloor  R^{2.2}\rfloor\ll r^3 $ steps, with probability very close to $1$, the walk $T_n$ will make at least $\lfloor R^{2.2}\rfloor $ horizontal steps. Then, by the reflection principle,
$$
\P(\max_{i\le n} (X_i-X_0)\ge m)=2\, \P(X_n-X_0>m)+\P(X_n-X_0=m).
$$
 By choosing $m=\lceil R\rceil$ and $n=\lfloor R^{2.2}\rfloor$ we get
$$
\P(\tau> r^3)\le \P\left(\max_{i\le r^3} \norm{T_i} <R\right)\le \P\left(\max_{i\le n} (X_i-X_0)<m\right)=\P\left(|\eta|<R^{-0.1}\right)+o(1)=o(1)
$$
where $\eta\sim\mathcal{N}(0,1)$.

Finally,
\begin{align*}
\P(\tau_0>r^3)&=\P(\tau_0>r^3,\tau< \tau_0)+\P(\tau_0>r^3,\tau\ge  \tau_0)
\\
&\le \P(\tau< \tau_0)+\P(\tau>r^3)
=\P(T_\tau\ne \mathbf{0})+o(1)\le\frac45+o(1)
\end{align*}
by~\eqref{tau0}.
\end{proof}

\begin{proof}[Proof of Theorem~\ref{Tex}]
Let $n_k$ be the index of the last element in the $k$th block and $n_{k,j}$  be the index of the last element in the $j$th sub-block of the $k$th block. 

Let $E_{k,j}=\left\{\norm{S_{n_{k,j}}}< k \sqrt{L_{k,j}\log L_{k,j} }\right\}$. Since $L_{k,j}=L_{k,j-1}^2$ for $j\ge 2$, and also $L_{k+1,1}\gg L_{k,k}^2$, by Hoeffding's inequality
\begin{align}\label{eqEj}
\P(E_{k,j}^c)\le \frac{2+o(1)}{L_{k,j}^2},
\qquad j=2,3,\dots,k.
\end{align}
Since 
$
L_{k,j+1}=L_{k,j}^2>\left(\sqrt{L_{k,j}\log L_{k,j}}\right)^3,
$
on $E_{k,j}$ by Lemma~\ref{lemmacube}, with probability at least $\alpha$, the walk $T_n$, $n\in [n_{k,j},n_{k,j+1}-3k^2]$ will visit the square $[0,k-1]^2$ at least once (as this is essentially a two-dimensional simple random walk with step sizes equal to~$k$.)

Now, Corollary~3.2 from~\cite{BV} implies that with probability of order $\frac{1}{k^2}$, the walk starts this sub-block at a position whose $x-$ and $y-$coordinates are both divisible by $k$. This follows from the fact that, with high probability, at least $k^2$ steps were taken in each of the two directions during the last 
$3k^2$ steps of the preceding block. As a result, 
$$
\P(A_{k,j+1}\mid {E}_{k,j})\ge \frac{c+o(1)}{k^2}
\qquad \text{where } A_{k,j+1}=\{T_n=\mathbf{0}\text{ for some }
n\in[n_{k,j},n_{k,j+1})\}
$$
for some $c>0$ independent of $k$ and $j$.

We will use the extension of the conditional Borel-Cantelli lemma from~\cite{BV} to conclude recurrence. Define $\mathcal{F}_{k,j}$ as the sigma-algebra generated by the walk up to the time $n_{k,j}$. We have 
$$
\sum_k\sum_{j=1}^{k-1}\P(A_{k,j+1}\mid E_{k,j},\mathcal{F}_{k,j})\ge  \sum_k \sum_{j=1}^{k-1}\frac{c+o(1)}{k^2}=\infty.
$$
At the same time,
$$
\sum_k \sum_{j=1}^{k-1}\P(E_{k,j}^c)<\infty.
$$
by~\eqref{eqEj}.
Hence, the events $A_{k,j+1}$ occur infinitely often a.s.\ by Lemma~5.1 in~\cite{BV}, thus implying recurrence of site $\mathbf{0}$. The fact that the walk visits all other $\mathbf{w}\in\Z^2$ a.s.\  follows from the same argument.
\end{proof}

\begin{remark}
The result of Theorem~\ref{trec} can be extended for faster growing sequences, for example, with the sub-block sizes equal to $2^{2^{k^{1+\nu}+j}}$, for  $\nu\in(0,1)$.
\end{remark}

\subsection*{Acknowledgment}
The research is partially supported by the Swedish Science Foundation grant VR 2019-04173. S.B.~would like to acknowledge Tom Johnston for suggesting the idea of the super-doubling sequence. 



\end{document}